      \theoremstyle{plain}
      \newtheorem{theorem}{Theorem}[section]
      \newtheorem{lemma}[theorem]{Lemma}
      \newtheorem{corollary}[theorem]{Corollary}
      \newtheorem{proposition}[theorem]{Proposition}
     \theoremstyle{definition}
      \newtheorem{definition}[theorem]{Definition}
      \newtheorem{remark}{Remark}
       \newcommand{\teq}{{\approx_\text t}}
      \newcommand{\nil}{\emptyset}
      \newcommand{\g}{{\mathfrak g}}
      \newcommand{\h}{{\mathfrak{h}}}
      \renewcommand{\a}{\mathfrak a}
       \newcommand{\E}{{\mathcal{E}}}
       \newcommand{\F}{{\mathcal{F}}}
      \renewcommand{\P}{{\mathcal{P}}}
     \newcommand{\Con}{\text{Con}}
      \newcommand{\n}[1]{\text l(#1)}
      \newcommand{\con}[2]{\Con(\mathfrak{#1},\mathcal{#2})}
      \newcommand{\tcon}[2]{\Con_{\text{t}}(\mathfrak{#1},\mathcal{#2})}
       \newcommand{\atom}[1]{\textbf{atom}(\mathcal{#1})} 
      \newcommand{\A}{\mathcal{A}}
       \newcommand{\B}{\mathcal{B}}
      \def\@setcopyright{}
      \def\serieslogo@{}
\begin{document}

   \author{Ali Rejali}
   \address{University of Isfahan}  
   \curraddr{Department of Mathematics, Faculty of  Sciences , University of Isfahan, Isfahan, 81746-73441, Iran}
   \email{rejali@sci.ui.ac.ir}


   \author{Meisam Soleimani Malekan}
   \address{Department of Mathematics, Ph. D student, University of Isfahan, Isfahan, 81746-73441, Iran}
   \email{m.soleimani@sci.ui.ac.ir}

   \title{Two-sided configuration equivalence and isomorphism}

     \begin{abstract}
     The concept of configuration was first introduced to give a characterization for the amenability of groups. Then the concept of two-sided configuration was suggested to provide normality to study the group structures more efficiently. It has been interesting that for which groups, two-sided configuration equivalence would imply isomorphism. We introduce a class of groups, containing polycyclic and FC groups, which for them, the notions of two-sided configuration equivalence and isomorphism coincide. 
      \end{abstract}

   \subjclass[2010]{20F05, 20F16, 20F24, 43A07}

   \keywords{Configuration, Two-sided Configuration, Finitely presented Groups,  Hopfian Groups, FC--Groups, Polycyclic Groups, Nilpotent Groups}

   \thanks{The authors would like to express  their gratitude toward Banach Algebra Center of Excellence for Mathematics, University of Isfahan.}
   \thanks{}

   \dedicatory{}

   \date{\today}


   \maketitle



\section{Introduction and definitions}
In the present paper, all groups are assumed to be finitely generated. Let $G$ be a group, we denote the identity of the group $G$ by $e_G$. We refer the readers to \cite{combin} for terminology and statements used in finitely generated groups. \par   
  The notion of a configuration for a locally compact group $G$, was introduced in \cite{rw}. It was shown in that paper that amenability of $G$ may be characterized by configuration.  \par
Let {\small $\mathfrak{g} =(g_1,\dotsc, g_n)$} be an ordered generating set and {\small $\mathcal{E} =\{E_1,\dotsc,E_m\}$} be a finite partition of a group G. A \textit{configuration} $C$ corresponding to $(\g,\E)$, is an $(n+1)$-tuple $C=(c_0,\dotsc, c_n)$ whose components are in $\{1,\dotsc,m\}$ and there are $x_0, x_1,\dotsc, x_n$ in $G$ such that $x_k\in E_{c_k}$, $k=0,1,\dotsc, n$, and $x_k=g_kx_0$, $k=1,\dotsc,n$. In this case, we say that $(x_0,x_1,\dotsc,x_n)$ has the configuration $C$.\par
  For $\g$ and $\E$ as above, we call $(\g,\E)$ a \textit{configuration pair}. The set of all configurations corresponding to the configuration pair $(\g,\E)$ will be denoted by $\con gE$. The set of all configuration sets of $G$, is denoted by $\Con(G)$.\par 
 A group $G$ is \textit{configuration contained} in a group $H$, written $G\precsim H$, if $\Con(G)\subseteq\Con(H)$, and two groups $G$ and $H$ are \textit{configuration equivalent}, written $G\approx H$, if $\Con(G) =\Con(H)$. \par
 In \cite{arw}, the authors showed that finiteness and periodicity are the properties which can be characterized by configuration. In \cite{arw} and \cite{ary}, the authors proved that for two equivalent groups, the isomorphism classes of their finite or Abelian quotients were the same. Also, it was shown in \cite{arw} that two configuration equivalent groups should satisfy in the same semi-group laws, and in \cite{rs} we generalized this result by proving that same group laws should be established in configuration equivalent groups. Hence, in particular, being Abelian and the group property of being nilpotent of class $c$ are another properties which can be characterized by configuration (see \cite{arw} and \cite{ary}). In \cite{ary}, it was shown that if $G\approx H$, and $G$ is a torsion free nilpotent group of Hirsch length $h$, then so is $H$. It is interesting to know the answer to the question whether being FC-group is conserved by equivalence of configuration; in \cite{ary}, it was answered under the assumption of being nilpotent, but in \cite{rs} it was affirmatively answered without any extra hypothesis. In addition, it was shown in that paper that the solubility of a group $G$ can be recovered from $\Con(G)$. 
\par We also interested to investigate in which subclasses of the class \textbf{G} of all groups, does configuration equivalence coincide with isomorphism? In \cite{arw}, this question was answered positively for the class of finite, free and Abelian groups. In \cite{ary}, it was shown that those groups with the form of $\mathbb{Z}^n\times F$, where $\mathbb Z$ is the group of integers, $n\in\mathbb N$ and $F$ is an arbitrary finite group, are determined up to isomorphism by their configurations. In \cite{ary}, it was proved that if $G\approx D_\infty$, where $D_\infty$ is the infinite dihedral group, then $G\cong D_\infty$.
 \par In \cite{rs}, we pointed out that it was the existence of \enquote{golden} configuration pairs which implies isomorphism. Indeed, we showed that in the class of finitely presented Hopfian groups with golden configuration pair, configuration equivalence coincided with isomorphism. 
 \par  Consider a configuration pair $(\g,\E)$ for a group $G$, a \textit{two-sided configuration}
corresponding to this pair is an $(2n + 1)$-tuple $C= (c_0,c_1,\dotsc,c_{2n})$ satisfying $c_i\in\{1,\dotsc,m\}$, $i=0,1,\dotsc,2n$, and there exists $x\in E_{c_0}$ such that $g_ix\in E_{c_i}$ and $xg_i\in E_{c_{i+n}}$ for each $i\in\{1,\dotsc,n\}$. \par
We denote the set of all two-sided configurations corresponding to the configuration pair $(\g,\E)$ by $\tcon gE$, and the set of all configuration sets of $G$ by $\Con_{\text t}(G)$. \par
The concept of \textit{being two-sided configuration contained} and \textit{two-sided configuration equivalence} are defined in a similar manner, and denoted by symbols \enquote{$\precsim_{\text t}$} and \enquote{$\teq$}, respectively. 
\par Let $G$ be a group with a generating set $\g=(g_1,\dotsc,g_n)$ and a partition $\E=\{E_1,\dotsc, E_m\}$. Then $\g^{-1}:=(g_1^{-1},\dotsc, g_n^{-1})$ is another generating set of $G$. Also, if $g\in G$ then 
{\small $$\E^{-1}:=\{E_1^{-1},\dotsc, E_m^{-1}\}\quad\text{and}\quad \E g:=\{E_1g,\dotsc, E_mg\}$$}
are partitions of $G$. A simple calculation shows that $\con gE=\Con(\g,\E g)$ for every $g\in G$. Let $y\in G$ and $E_i\in \E$. Working with $\con gE$, we can assume that $y\in E_i$. This assumption is no longer true for two-sided configuration sets. Let $(\h,\F)$ be a configuration pair for a group $H$ with $\tcon gE=\tcon hF$. Then, equations $\con gE=\con hF$ and $\Con(\g^{-1},\E^{-1})=\Con(\h^{-1},\F^{-1})$ are hold and hence, if $G\teq H$ then $G\approx H$. We do not know whether the converse of this implication holds true or not? \par
We show that normality in a group is conserved under two-sided configuration equivalence. The conditions in the definition of golden configuration pair, \cite[Definition 4.1]{rs}, are difficult to be investigated. We can reduce these conditions in the context of two-sided configuration to get a new definition of golden configuration pair, and show that there are many groups satisfied in our definition. 
 We show that if $G$ and $H$ have the same two-sided configuration sets and $N$ is a normal subgroup of $G$ such that $G/N$ is finitely presented and has a golden configuration pair, then there is a normal subgroup $\mathfrak N$ of $H$ such that $G/N\cong H/ \mathfrak N$; as a consequence of this fact, for the class of groups which are finitely presented and Hopfian with golden configuration pair, two notions of two-sided configuration equivalence and isomorphism coincided.  This class of groups, contains many well-known class of groups, such as finite, free, abelian, nilpotent, FC, and polycyclic groups. We will define a new class of groups, called \textit{polynomial type groups}, which have golden configuration pairs. Also, all groups which have a finite subnormal series such that all its factors are polynomial type admit a golden configuration pair.

\section{Preliminaries}
We devote this section to provide the preliminaries and notations needed in the following sections;\par 
\par Following \cite{rs}, let $G$ and $H$ be two groups with generating sets $\g$ and $\h$, respectively. Suppose that, for partitions 
 $\E=\{E_1,\dotsc,E_m\}$ and $\F=\{F_1,\dotsc,F_m\}$ of $G$ and $H$ respectively, the equality 
 $\tcon gE=\tcon hF$, or $\con gE=\con hF$, established. Then we may say that $E_i$ is \textit{corresponding} to $F_i$, and write $E_i \leftrightsquigarrow F_i$, $i=1,\dotsc,m$.
 \par
Let $G$ be a group with $\mathfrak{g}=(g_1,\dotsc,g_n)$ as its ordered generating set. Let $p$ be a positive integer, let  $J$ and $\rho$ be $p$-tuple with components in $\{1,2,\dotsc,n\}$ and $\{\pm1\}$, respectively. We denote the product 
$\prod_{i=1}^pg_{J(i)}^{\rho(i)}$ by $W(J,\rho;\mathfrak{g})$. We call the pair 
$(J,\rho)$ a \textit{representative pair} on $\g$ and $W(J,\rho;\mathfrak{g})$ a \textit{word} corresponding to $(J,\rho)$ in $\g$. \par
The reader confirms that if $\g$ in the above notation is an ordered subset of $G$ instead of being a generating set, then $W(J,\rho;\mathfrak{g})$ can be defined as well. \par 
For an arbitrary multiple $J$, we denoted its components number by $\n J$.When we speak of a representative pair $(J,\rho)$ we assume the same number of components for $J$ and $\rho$. If $J=(J(1),\dotsc,J(p))$, and $\rho=(\rho(1),\dotsc,\rho(p))$, where $p$ is a positive integer, we set
 \begin{align*}
 J^{-1}&:=(J(p),\dotsc,J(1))\quad\text{and}\quad
 \rho^{-1}:=(-\rho(p),\dotsc,-\rho(1))
\end{align*}
\par
For positive integers $p_i$, if $J_i$ is a $p_i$-tuple, $i=1,2$, $J_1\oplus J_2$ is a $(p_1+p_2)$-tuple that has $J_1$ as its first $p_1$ components, and $J_2$ as its second $p_2$ components. it can be easily investigated that
\begin{small}
\begin{enumerate}
\item $W(J_1,\rho_1;\mathfrak{g})W(J_2,\rho_2;\mathfrak{g})=
W(J_1\oplus J_2,\rho_1\oplus\rho_2;\mathfrak{g})$
\item $W(J,\rho;\mathfrak{g})^{-1}=W(J^{-1},\rho^{-1};\mathfrak{g})$
\end{enumerate}
\end{small}
For a $\sigma$-algebra $\A$, we define $\tcon gA$ to be $\Con_\text t(\g,\atom A)$, where $\atom A$ is the collection of atomic sets in $\A$. 
\par For a finite collection $\mathcal{C}$ of subsets of $G$, the $\sigma$-algebra generated by elements of $\mathcal{C}$ is denoted by $\sigma(\mathcal C)$ and is finite.\par 
 Let $\E:=\{E_1,\dotsc,E_m\}$ and $\F:=\{F_1,\dotsc,F_m\}$ be partitions of $G$ and $H$ respectively, such that $E_i\leftrightsquigarrow F_i$, $i=1,\dotsc,m$. For $A\in\sigma(\E)$ and $B\in\sigma(\F)$, we say $A$ is corresponding to $B$, written $A\leftrightsquigarrow B$, when 
$$\{k:\,E_k\cap A\neq\nil\}=\{k:\, F_k\cap B\neq\nil\}$$
If $A\leftrightsquigarrow B$, and $A=E_{i_1}\cup\dotsb\cup E_{i_j}$, then $B=F_{i_1}\cup\dotsb\cup F_{i_j}$. By an argument as used in Lemma 2.2 of \cite{rs}, one can show that 
\begin{lemma}
\label{important}
Let $G$ and $H$ be two groups with finite $\sigma$-algebras $\A$ and $\B$ and generating sets $\g=(g_1,\dotsc,g_n)$ and $\h=(h_1,\dotsc,h_n)$, respectively, such that $\tcon gA=\tcon hB$. Consider $A_1,A_2\in\A$ and $B_1, B_2\in\B$ with $A_i\leftrightsquigarrow B_i$, $i=1,2$. We have:
\begin{itemize}
\item[(1)] If $g_rA_1\subseteq A_2$, then $h_rB_1\subseteq B_2$, 
\item[(2)] If $A_1g_r\subseteq A_2$, then $B_1h_r\subseteq B_2$, 
\item[(3)] If $g_rA_1=A_2$, then $h_rB_1=B_2$, 
\item[(4)] If $A_1g_r=A_2$, then $B_1h_r=B_2$, 
\end{itemize}
for $r\in\{1,\dotsc,n\}$.
\end{lemma}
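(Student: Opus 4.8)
The plan is to run, essentially unchanged, the argument of Lemma 2.2 of \cite{rs}, using the one feature that makes a \emph{two-sided} configuration more informative than an ordinary one: a single tuple $C=(c_0,c_1,\dots,c_{2n})\in\tcon hB$ records, for one common witness $x$, the atom of $\atom{B}$ containing $x$ (this is $c_0$), the atoms containing all left translates $h_ix$ (these are $c_1,\dots,c_n$), and the atoms containing all right translates $xh_i$ (these are $c_{n+1},\dots,c_{2n}$). Because $\mathcal A,\mathcal B$ are $\sigma$-algebras, every member of them is a union of atoms, and I will freely use that an atom of $\mathcal B$ (resp. $\mathcal A$) is either contained in a given $B_i$ (resp. $A_i$) or disjoint from it, together with the fact, recorded before the lemma, that $A\leftrightsquigarrow B$ means exactly that $A$ and $B$ are unions of atoms with the same index set. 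Throughout I abbreviate $G\setminus A$ by $A^c$, and I use that $x\mapsto g_rx$ and $x\mapsto xg_r$ are bijections of the underlying group, so that they commute with complementation of subsets.

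First I would prove (1). Assuming $g_rA_1\subseteq A_2$, suppose toward a contradiction that some $y\in B_1$ has $h_ry\notin B_2$. Let $F_j\in\atom{B}$ be the atom containing $y$ and $F_k\in\atom{B}$ the atom containing $h_ry$; then $F_j\subseteq B_1$ while $F_k\cap B_2=\nil$. Passing to the corresponding atoms $E_j,E_k$ of $\mathcal A$ (same indices), we get $E_j\subseteq A_1$ and $E_k\cap A_2=\nil$. Now $y$ realizes a two-sided configuration $C\in\tcon hB$ with $c_0=j$ and $c_r=k$; by hypothesis $C\in\tcon gA$, so there is $x\in E_j$ with $g_rx\in E_k$. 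But $x\in E_j\subseteq A_1$ forces $g_rx\in g_rA_1\subseteq A_2$, contradicting $g_rx\in E_k$ and $E_k\cap A_2=\nil$. Hence $h_rB_1\subseteq B_2$. Item (2) is the mirror argument: if $A_1g_r\subseteq A_2$ and some $y\in B_1$ had $yh_r\notin B_2$, choose atoms $F_j\ni y$ and $F_k\ni yh_r$, pass to $E_j\subseteq A_1$ and $E_k\cap A_2=\nil$, read off from $y$ a two-sided configuration with $c_0=j$ and $c_{r+n}=k$, transport it to $\tcon gA$ to obtain $x\in E_j\subseteq A_1$ with $xg_r\in E_k$, and derive the same contradiction from $xg_r\in A_1g_r\subseteq A_2$.

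For (3) and (4) I would first note that $\leftrightsquigarrow$ is compatible with complementation: if $A\leftrightsquigarrow B$ then $A^c$ and $B^c$ are the unions of the two complementary families of atoms, so $A^c\leftrightsquigarrow B^c$, and $A^c\in\mathcal A$, $B^c\in\mathcal B$ because these are $\sigma$-algebras. Then for (3), assume $g_rA_1=A_2$. Part (1) already gives $h_rB_1\subseteq B_2$. Bijectivity of $x\mapsto g_rx$ gives $g_rA_1^c=A_2^c$, hence $g_rA_1^c\subseteq A_2^c$, so (1) applied to $A_1^c,A_2^c,B_1^c,B_2^c$ yields $h_rB_1^c\subseteq B_2^c$; complementing and using that $x\mapsto h_rx$ is a bijection turns this into $B_2\subseteq h_rB_1$, so $h_rB_1=B_2$. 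Item (4) follows from (2) in the same way, using the complement trick for right translations.

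The whole argument is bookkeeping, and I expect no genuine obstacle. The two points that need a line of justification rather than being automatic are: (a) that the tuple one "reads off" from a fixed element $y$ (recording the atoms of $y$, of the $h_iy$, and of the $yh_i$) is literally a member of $\tcon hB$, which is immediate from the definition of a two-sided configuration and is exactly what lets us transport it to $\tcon gA$; and (b) the compatibility of the correspondence $\leftrightsquigarrow$ with restriction to atoms and with complementation, which is what reduces items (3) and (4) to items (1) and (2). This is why it is legitimate to say, as in the excerpt, that the statement follows "by an argument as used in Lemma 2.2 of \cite{rs}."
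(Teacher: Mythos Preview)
Your proof is correct and follows precisely the approach the paper indicates: the paper gives no explicit proof but simply invokes ``an argument as used in Lemma~2.2 of \cite{rs}'', and your write-up is exactly that argument, with the left/right symmetry supplied by reading the coordinates $c_r$ versus $c_{r+n}$ of a two-sided configuration, and with the equalities in (3) and (4) obtained from the inclusions in (1) and (2) via complementation.
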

Let $G$ and $H$ be two groups. Consider partitions $\E=\{E_1,\dotsc,E_r\}$ and $\F=\{F_1,\dotsc,F_r\}$ of $G$ and $H$, respectively, and their refinements
$$
\E'=\{E'_1,\dotsc,E'_s\}\,\text{and}\,\F'=\{F'_1,\dotsc,F'_s\}
$$
We say that $(\E',\E)$ and $(\F',\F)$ are \textbf{similar} and write 
$(\E',\E)\sim(\F',\F)$, if 
$$ \{l : E_k\cap E'_l\neq\emptyset\}=\{l : F_k\cap F'_l\neq\emptyset\}\quad(k=1,\dotsc,r) $$
In other words, if $E_k=\bigcup_{j=1}^tE'_{i_j}$, then we have $F_k=\bigcup_{j=1}^tF'_{i_j}$. \par 
For $\sigma$-subalgebras $\A'$ and $\B'$ of $\sigma$-algebras $\A$ and $\B$, respectively, we say $(\A,\A')$ and $(\B,\B')$ are \textbf{similar}, written $(\A,\A')\sim(\B,\B')$, if 
$$(\atom A,\atom{\A'})\sim(\atom{B},\atom\B')$$
A similar argument as in used in the proof of \cite[Lemma 3.2.]{rs}, ensures that:
\begin{lemma}
\label{Similarity}
let $G$ and $H$ be two groups. Assume that $(\g,\E)$ and $(\h,\F)$ are two configuration pairs for $G$ and $H$, respectively, and let $\E'$ and $\F'$ be their similar refinements such that $\tcon g{E'}=\tcon h{F'}$. Then $\tcon gE=\tcon hF$.
\end{lemma}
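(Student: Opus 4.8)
The plan is to prove the two inclusions $\tcon gE \subseteq \tcon hF$ and $\tcon hF \subseteq \tcon gE$ separately; since both the similarity relation $(\E',\E)\sim(\F',\F)$ and the equality $\tcon g{E'} = \tcon h{F'}$ are symmetric under exchanging the roles of $G$ and $H$, it is enough to carry out one of them, say the first. Before starting, I would record the basic dictionary: writing $\E = \{E_1,\dots,E_r\}$, $\E' = \{E'_1,\dots,E'_s\}$ and similarly for $\F,\F'$, the fact that $\E'$ refines $\E$ makes $E_k \cap E'_l \neq \emptyset$ equivalent to $E'_l \subseteq E_k$, so similarity says exactly that $\{l : E'_l \subseteq E_k\} = \{l : F'_l \subseteq F_k\}$ for every $k$. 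In particular each index $l$ has a unique $k = k(l)$ with $E'_l \subseteq E_{k(l)}$, and then automatically $F'_l \subseteq F_{k(l)}$.

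Next I would take an arbitrary two-sided configuration $C = (c_0,\dots,c_{2n}) \in \tcon gE$ together with a witness $x \in G$ for it, so that $x \in E_{c_0}$, $g_i x \in E_{c_i}$ and $x g_i \in E_{c_{i+n}}$ for $i = 1,\dots,n$. Because $\E'$ is a partition, each of the elements $x$, $g_i x$, $x g_i$ lies in a unique atom of $\E'$; call the corresponding indices $l_0$, $l_i$, $l_{i+n}$. The key point is that the very same $x$ then witnesses that $C' := (l_0, l_1,\dots,l_{2n})$ belongs to $\tcon g{E'}$, and by the dictionary above $k(l_0) = c_0$, $k(l_i) = c_i$, $k(l_{i+n}) = c_{i+n}$.

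Then I would invoke the hypothesis $\tcon g{E'} = \tcon h{F'}$ to get $C' \in \tcon h{F'}$, hence a witness $y \in H$ with $y \in F'_{l_0}$, $h_i y \in F'_{l_i}$ and $y h_i \in F'_{l_{i+n}}$. Feeding each of these memberships into $F'_l \subseteq F_{k(l)}$ yields $y \in F_{c_0}$, $h_i y \in F_{c_i}$ and $y h_i \in F_{c_{i+n}}$, which is precisely the statement that $y$ witnesses $C \in \tcon hF$. This closes the inclusion $\tcon gE \subseteq \tcon hF$, and the reverse inclusion is obtained by running the identical argument with $G$ and $H$ interchanged, giving $\tcon gE = \tcon hF$.

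I do not expect a genuine obstacle here: the argument is pure bookkeeping, the only real idea being that a single witness realizes a configuration and all of its refinements simultaneously, combined with the translation $E'_l \subseteq E_k \Leftrightarrow F'_l \subseteq F_k$ supplied by similarity. The one thing to stay alert to is that, unlike for ordinary configurations, in the two-sided setting one cannot simplify matters by replacing a partition $\E$ with a translate $\E g$; but the refinement argument never calls for such a move, so this causes no difficulty. The whole proof is the two-sided analogue of the proof of \cite[Lemma 3.2]{rs}, with the one-sided tuples of length $n+1$ replaced throughout by two-sided tuples of length $2n+1$.
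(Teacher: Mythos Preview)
Your argument is correct and is exactly the bookkeeping proof one expects: pass from a witness for the coarse configuration to the unique refined configuration it realizes, transfer that refined configuration to $H$ via the hypothesis, and then coarsen back using the similarity dictionary $E'_l\subseteq E_{k(l)}\Leftrightarrow F'_l\subseteq F_{k(l)}$. The paper itself does not spell out a proof of this lemma but simply says it follows by the same argument as \cite[Lemma~3.2]{rs}; what you have written is precisely the two-sided version of that argument, so your approach coincides with the one the paper invokes.
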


\section{Two-sided configuration of finitely presented Hopfian groups}
Working with two-sided configuration, we will see that we are provided with a normality in the following sense: 
\begin{definition}
\label{normality}
Let $G$ be a group and $E\subseteq G$. We say that $E$ is a \textit{normal subset} of $G$, if 
\begin{equation}
\label{31}
Eg=gE\quad(g\in G)
\end{equation}
\end{definition}
It is obvious that if $\g=(g_1,\dotsc,g_n)$ is a generating set of $G$, then we can reduce equations in (\ref{31}) to 
$$Eg_i=g_iE\quad(i=1,\dotsc,n)$$
Let $G$ be a group and, $N$ be a normal subgroup of $G$. We denote by $q:=q_N$ the quotient map. For a partition $\E$ of $G/N$ and a generating set $\g=(g_1,\dotsc,g_n)$ of $G$, it is evident that $q^{-1}(\E):=\{q^{-1}(E):\,E\in\E\}$ is a partition of $G$ and $q(\g)=(q(g_1),\dotsc,q(g_n))$ is a generating set of $G/N$. \par 
We know by \cite[Theorem 1.1]{combin} that two finitely generated groups are isomorphic, if they satisfy in the same relations. So, if for a group $G$, two-sided configuration equivalence leads to isomorphism, then at least one of the presentations of each non-identity element should be recovered by the configuration set of the group. This recovery is precisely what we mean by a \enquote{golden configuration pair}:
\begin{definition}
\label{recognizable}
Let $G$ be a group with a generating set $\g$. Consider a normal subgroup $N$ of $G$ and a partition $\E$ of $N$. We say that $(\g,q^{-1}(\E))$ is \textit{golden} w.r.t. $N$, if $\E$ contains $\{e_{G/N}\}$ and there exist representative pairs $(J_g,\rho_g)$, $g\in G\setminus N$, such that 
\begin{itemize}
\item[(1)] $q(g)=W(J_g,\rho_g;q(\g))$,
\item[(2)] If $\Con_\text t(\g,q^{-1}(\E))=\tcon  hF$, for a configuration pair $(\h,\F)$ of a groups $H$, then 
$$W(J_g,\rho_g;\h)M\cap M=\nil$$
where $M\in\F$ is corresponding to $N$.
\end{itemize}
 \end{definition}
 We say that a configuration pair $(\g,\E)$ of $G$ is golden when $(\g,\E)$ is a golden configuration pair w.r.t. $N=\{e_G\}$. \par 
 As a consequence of Lemma \ref{Similarity}, one can easily show that (see \cite[Lemma 3.3.]{rs}):
\begin{lemma}
\label{refinement}
Let $G$ be a group with a normal subgroup $N$. Assume that $(\g,q^{-1}(\E))$ is a golden configuration pair w.r.t. $N$. Then for each refinement $\E'$ of $\E$, the configuration pair $(\g,q^{-1}(\E'))$ is golden w.r.t. $N$. 
\end{lemma}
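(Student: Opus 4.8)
The plan is to show that the very representative pairs $(J_g,\rho_g)$, $g\in G\setminus N$, that witness the golden property of $(\g,q^{-1}(\E))$ w.r.t.\ $N$ also witness that $(\g,q^{-1}(\E'))$ is golden w.r.t.\ $N$. Condition (1) of Definition \ref{recognizable} involves only $\g$ and $N$, so it is inherited verbatim, and since $\{e_{G/N}\}$ is a singleton it belongs to every refinement of $\E$, whence $\E'$ still contains it. Thus only condition (2) requires work: starting from a configuration pair $(\h,\F')$ of a group $H$ with $\Con_\text t(\g,q^{-1}(\E'))=\tcon h{F'}$, one must deduce $W(J_g,\rho_g;\h)M\cap M=\nil$, where $M\in\F'$ corresponds to $N$.

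First I would record the elementary facts that, because $\E'$ refines $\E$ and $q^{-1}$ commutes with unions and intersections, $q^{-1}(\E')$ is a refinement of $q^{-1}(\E)$, and that $N=q^{-1}(\{e_{G/N}\})$ occurs as a block of both $q^{-1}(\E)$ and $q^{-1}(\E')$; after relabelling we may assume it carries the same index in each. Then I would coarsen $\F'$ so as to match $q^{-1}(\E)$: for each block $q^{-1}(E_k)$, $E_k\in\E$, write it as the union of those blocks $q^{-1}(E'_l)$ of $q^{-1}(\E')$ that it contains, and set $F_k$ to be the union of the corresponding blocks $F'_l\in\F'$. The family $\F=\{F_1,\dotsc,F_m\}$ is a finite partition of $H$ (the relevant index sets are pairwise disjoint and exhaustive, since $\E$ and $\F'$ are partitions), it is coarser than $\F'$, and by construction $(q^{-1}(\E'),q^{-1}(\E))\sim(\F',\F)$. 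Lemma \ref{Similarity} then yields $\Con_\text t(\g,q^{-1}(\E))=\tcon h{F}$.

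Finally, the block of $\F$ corresponding to $N$ is exactly $M$: the block $\{e_{G/N}\}$ over $N$ is not amalgamated with any other block when $\F$ is formed, so $N$ remains matched with the same block $M\in\F'$. Applying condition (2) of Definition \ref{recognizable} for the golden pair $(\g,q^{-1}(\E))$ to the configuration pair $(\h,\F)$ gives $W(J_g,\rho_g;\h)M\cap M=\nil$, which is precisely condition (2) for $(\g,q^{-1}(\E'))$, completing the verification. I expect the only delicate point to be the bookkeeping in the coarsening step — confirming that $\F$ is a bona fide partition and that the block attached to $N$ is transported correctly through the refinement and the coarsening, so that the same word $W(J_g,\rho_g;\h)$ and the same set $M$ appear at both ends; all the substantive content is supplied by Lemma \ref{Similarity}.
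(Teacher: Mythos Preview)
Your proposal is correct and follows exactly the approach the paper indicates: the paper simply states that the lemma is a consequence of Lemma~\ref{Similarity} (the similarity lemma), and what you have written is precisely the natural unpacking of that remark---coarsen $\F'$ to $\F$ so that $(q^{-1}(\E'),q^{-1}(\E))\sim(\F',\F)$, apply Lemma~\ref{Similarity} to obtain $\Con_\text t(\g,q^{-1}(\E))=\tcon hF$, and note that the block $M$ attached to $N$ is unchanged because $\{e_{G/N}\}$ is already an atom of $\E$.
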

Using this lemma, we have:
\begin{lemma}
\label{all representative pair instead of one}
Let $N$ be a normal subgroup of a group $G$ such that $G/N$ is finitely presented. Suppose that $G$ has a golden configuration pair, $(\g,q^{-1}(\E))$, w.r.t. $N$. Then there is a refinement $\E'$ of $\E$ such that if $\Con_\text t(\g,q^{-1}(\E'))=\tcon  hF$, for a configuration pair $(\h,\F)$ of a groups $H$, and $M\in\F$ is corresponding to $N$, then $M$ will be a normal subset and 
$$W(J,\rho;\h)M\cap M=\nil$$
when $W(J,\rho;\g)\not\in N$.
\end{lemma}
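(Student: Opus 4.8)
The plan is to choose the refinement $\E'$ so fine that Lemma \ref{important} transports from $G$ to $H$ every one--generator multiplication relation needed to witness that $N$ is normal and that a fixed finite set of relators of $G/N$ is trivial; once the block $M$ corresponding to $N$ is known to be a normal subset and to be fixed by every relator read as a word in $\h$, the ``one representative pair per element'' furnished by goldenness can be bootstrapped to every representative pair. Concretely, I would fix a finite presentation $G/N=\langle q(g_1),\dotsc,q(g_n)\mid r_1,\dotsc,r_k\rangle$ and write $r_j=W(J_j,\rho_j;q(\g))$, so that $W(J_j,\rho_j;\g)\in N$. Let $S\subseteq G/N$ be the finite set consisting of $e_{G/N}$, the elements $q(g_i)^{\pm1}$ ($1\le i\le n$), and every right partial product (suffix) of each of $r_1,\dotsc,r_k$. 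Take $\E'$ to be the common refinement of $\E$ with $\{\{s\}:s\in S\}\cup\{(G/N)\setminus S\}$: this is a finite refinement of $\E$ in which each $s\in S$ is a singleton block, and by Lemma \ref{refinement} the pair $(\g,q^{-1}(\E'))$ is still golden w.r.t.\ $N$. Assume now $\Con_{\text t}(\g,q^{-1}(\E'))=\tcon hF$ and let $M\in\F$ correspond to the block $N=q^{-1}(\{e_{G/N}\})$ of $q^{-1}(\E')$.

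Working in the $\sigma$-algebras generated by $q^{-1}(\E')$ and $\F$, I would first apply parts (3) and (4) of Lemma \ref{important} to the equalities $g_iN=Ng_i=q^{-1}(\{q(g_i)\})$ (a block of $q^{-1}(\E')$ because $q(g_i)\in S$), obtaining $h_iM=Mh_i$ for every $i$; by the remark after Definition \ref{normality}, $M$ is then a normal subset of $H$. Next, I fix $j$, write $r_j=q(g_{a_1})^{\eta_1}\dotsb q(g_{a_l})^{\eta_l}$, and set $y_0:=e_G$ and $y_t:=g_{a_{l-t+1}}^{\eta_{l-t+1}}y_{t-1}$ for $1\le t\le l$. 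Each $q(y_t)$ is a suffix of $r_j$ and so lies in $S$, hence $q^{-1}(\{q(y_t)\})$ is a block of $q^{-1}(\E')$ with corresponding block $M^{(t)}\in\F$, and $M^{(0)}=M^{(l)}=M$ because $q(y_0)=q(y_l)=e_{G/N}$. From $q^{-1}(\{q(y_t)\})=g_{a_{l-t+1}}^{\eta_{l-t+1}}q^{-1}(\{q(y_{t-1})\})$ and Lemma \ref{important}(3) (together with its evident variant for an inverse of a generator) I get $M^{(t)}=h_{a_{l-t+1}}^{\eta_{l-t+1}}M^{(t-1)}$, and iterating gives $W(J_j,\rho_j;\h)M=M^{(l)}=M$.

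Let $\Psi$ denote the homomorphism from the free group on $n$ letters to $H$ sending the $i$-th letter to $h_i$, and $\pi$ the corresponding surjection onto $G/N$. Any element of $\ker\pi$ is a product of conjugates of the words spelling $r_1,\dotsc,r_k$, so its $\Psi$-image has the form $\prod_s v_s\,W(J_{j_s},\rho_{j_s};\h)^{\epsilon_s}v_s^{-1}$ with $v_s\in H$; since $M$ is normal and each $W(J_j,\rho_j;\h)$ fixes $M$, every factor fixes $M$ --- indeed $vWv^{-1}M=vWMv^{-1}=vMv^{-1}=M$ whenever $WM=M$ and $M$ is normal --- hence so does the product. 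Therefore $W(J,\rho;\h)M$ depends only on the element $W(J,\rho;q(\g))$ of $G/N$. Finally, given a representative pair $(J,\rho)$ with $W(J,\rho;\g)\notin N$, put $g:=W(J,\rho;\g)\in G\setminus N$: goldenness of $(\g,q^{-1}(\E'))$ supplies $(J_g,\rho_g)$ with $q(g)=W(J_g,\rho_g;q(\g))$ and $W(J_g,\rho_g;\h)M\cap M=\nil$; since $W(J,\rho;q(\g))=q(g)=W(J_g,\rho_g;q(\g))$, we conclude $W(J,\rho;\h)M=W(J_g,\rho_g;\h)M$, whence $W(J,\rho;\h)M\cap M=\nil$.

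The step I expect to be the main obstacle is the joint design of $\E'$ and the relator computation: one must anticipate exactly which cosets of $N$ --- the suffixes of the finitely many relators together with the generators --- have to be split off as singleton blocks so that Lemma \ref{important} can be fed the relations it needs one generator at a time, and one must notice that it is normality of $M$, not merely the relators fixing $M$, that allows ``relators fix $M$'' to be upgraded to ``the whole normal closure of the relators fixes $M$'' --- the point that makes the action of $\h$-words on $M$ factor through $G/N$.
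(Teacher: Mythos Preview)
Your proof is correct and follows essentially the same approach as the paper's: refine so that enough cosets of $N$ become singleton blocks, use Lemma~\ref{important} to transport the normality of $N$ and the triviality of the relators to $M$, and then combine normality of $M$ with goldenness (preserved under refinement by Lemma~\ref{refinement}) to show that the action of $\h$-words on $M$ factors through $G/N$. The paper's only differences are cosmetic: it makes \emph{all} cosets $\{W(J,\rho;q(\g))\}$ with $\n{J}\le 3k$ (where $k$ bounds the relator lengths) into singletons rather than just the relator suffixes and the generators, and it phrases the last step as ``the map $W(J,\rho;\g)N\mapsto W(J,\rho;\h)M$ is well-defined'' without spelling out the normal-closure argument in the free group as you do.
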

\begin{proof}
By Lemma \ref{refinement}, it suffices to show that there exists a refinement $\E'$ of $\E$, such that the map $W(J,\rho;\g)N\mapsto W(J,\rho;\h)M$ will be a well-defined function on the cosets of $N$. \par 
Suppose that $\g=(g_1,\dotsc,g_n)$. Let $\{W(J_i,\sigma_i;q(\g)):\, i=1,\dotsc,m\}$ be a set of defining relators of $G/N$, and 
$$\mathfrak{F}:=\{(J_i,\sigma_i):\, i=1,\dotsc,m\}\cup\{((j),(1)):\,j=1,\dotsc,n\}$$
Assume that $k:=\max\{\n J:\,(J,\rho)\in\mathfrak{F}\}$, and set
\begin{align*}
S:&=\{(J,\rho):\,\n J\leq 3k\}\\
S_0:&=\{(J,\rho):\,\n J\leq k\}
\end{align*}
Now, consider a refinement $\E'$ which contains singleton sets $\{W(J,\rho;q(\g))\}$, $(J,\rho)\in S$. Let $(\h,\F)$ be a configuration pair  for a group $H$, such that $\Con_\text t(\g,q^{-1}(\E'))=\tcon hF$ and consider $M\in\F$ to be a set that $N\leftrightsquigarrow M$. We see that $Ng_i=N=g_iN$, $i=1,\dotsc,n$. By (3) and (4) in Lemma \ref{important}, $M$ is a normal subset of $H$. Also, by induction on $\n J$, one can show that 
$$W(J,\rho;\h)M=M(J,\rho),\quad (J,\rho)\in S_0$$
where $M(J,\rho)$ is an element of $\F$ corresponding to $W(J,\rho;\g)N$. So, in particular, we have 
$$W(J_i,\rho_i;\h)M=M,\quad i=1,\dotsc,m$$
Let $(J,\rho)$ and $(I,\delta)$, be two representative pair, and take $W_i=W(J_i,\rho_i;\h)$, $i=1,\dotsc,m$. As $M$ is normal, and $W_iM=M$, it can be seen that 
$$W(J,\rho;\h)W_i^{\pm}W(I,\delta;\h)M=W(J\oplus I,\rho\oplus\delta;\h)M$$
This shows that the above function is well-defined. 
\end{proof}
By this lemma, we state and prove the key lemma of the paper:
\begin{lemma}
\label{4.4}
Let $G$ and $H$ be two groups such that $G\teq H$. Suppose that for a normal subgroup $N$ of $G$ the quotient $G/N$ is finitely presented and $G$ has a golden configuration pair w.r.t. $N$. Then, there is a normal subgroup $\mathfrak{N}$ of $H$ such that $G/N\cong H/\mathfrak{N}$.
\end{lemma}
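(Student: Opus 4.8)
The plan is to use Lemma \ref{all representative pair instead of one} to manufacture a well-defined surjection between quotients and then identify its kernel with a normal subgroup of $H$. First I would fix a golden configuration pair $(\g, q^{-1}(\E))$ of $G$ w.r.t. $N$ with $\g=(g_1,\dotsc,g_n)$, and pass to the refinement $\E'$ provided by Lemma \ref{all representative pair instead of one}. Since $G \teq H$, we have $\Con_\text t(\g, q^{-1}(\E')) = \tEq{h}{F'}$ for a suitable generating set $\h = (h_1,\dotsc,h_n)$ of $H$ and a partition $\F'$ of $H$ similar to $q^{-1}(\E')$; let $M \in \F'$ be the block corresponding to $N$. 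By that lemma $M$ is a normal subset of $H$ and $W(J,\rho;\h)M \cap M = \nil$ whenever $W(J,\rho;\g) \notin N$, while the proof of that lemma also shows $W(J,\rho;\h)M = M(J,\rho)$ depends only on the coset $W(J,\rho;\g)N$.

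Next I would define $\mathfrak{N} := \{\, h \in H : hM = M \,\}$. Because $M$ is a normal subset, $\mathfrak{N}$ is a subgroup (it is the stabilizer of $M$ under left translation, and left translations commute with right translations here so the stabilizer is the same on both sides), and normality of $M$ forces $\mathfrak{N}$ to be a normal subgroup of $H$: if $hM = M$ then $(ghg^{-1})M = gh(g^{-1}M) = gh(Mg^{-1}) = g(hM)g^{-1} = gMg^{-1} = M$. The key point, to be extracted from Lemma \ref{all representative pair instead of one}, is that $M$ itself equals $\mathfrak{N}$: on one hand $e_H \in M$ (since $\{e_{G/N}\} \in \E$ corresponds to the block of $M$ containing the image of $N$, and $M$ is the block corresponding to $N$, so $M$ contains $e_H$), and $W(J,\rho;\h)M = M$ exactly when $W(J,\rho;\g) \in N$; since $\g$ generates $G$ and $\h$ generates $H$, every element of $H$ is of the form $W(J,\rho;\h)$, and $W(J,\rho;\h) \in M \iff W(J,\rho;\h)M = M \iff W(J,\rho;\g)N = N$. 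Hence $M = \mathfrak{N}$ is a normal subgroup of $H$, and $H/\mathfrak{N}$ is generated by $q'(h_1),\dotsc,q'(h_n)$ where $q'$ is the quotient map onto $H/\mathfrak{N}$.

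Then I would build the isomorphism $\varphi: G/N \to H/\mathfrak{N}$ by $\varphi\big(W(J,\rho;q(\g))\big) := W(J,\rho;q'(\h))$, i.e. $q(g_i) \mapsto q'(h_i)$. Well-definedness is precisely the content of the last displayed equation in the proof of Lemma \ref{all representative pair instead of one}: if two words represent the same coset of $N$ then the corresponding words represent the same coset of $\mathfrak{N}$, because $W(J,\rho;\h)\,\mathfrak{N} = W(J,\rho;\h)M$ depends only on $W(J,\rho;\g)N$. It is clearly a surjective homomorphism since it sends generators to generators and respects products by item (1) of the word calculus in Section 2. For injectivity, suppose $W(J,\rho;q'(\h)) = \mathfrak{N}$, i.e.\ $W(J,\rho;\h) \in M$; then $W(J,\rho;\h)M \cap M \neq \nil$, so by the contrapositive of the conclusion $W(J,\rho;\h)M\cap M = \nil$ when $W(J,\rho;\g)\notin N$, we must have $W(J,\rho;\g) \in N$, i.e.\ $W(J,\rho;q(\g)) = N$ in $G/N$. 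Thus $\varphi$ is an isomorphism.

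The main obstacle, and the place where care is needed, is establishing that the block $M$ corresponding to $N$ is actually equal to the stabilizer subgroup $\mathfrak{N}$ rather than merely being contained in it: this requires using both directions of the golden property — the positive statement $W(J,\rho;\h)M = M$ for $W(J,\rho;\g)\in N$ (coming from the induction in the previous lemma together with $W(J_i,\rho_i;\h)M = M$ on the defining relators) and the separation statement $W(J,\rho;\h)M\cap M = \nil$ for $W(J,\rho;\g)\notin N$ — and it also uses crucially that every element of $H$ is a word in $\h$, which holds because $\h$ generates $H$ (a consequence of $G \teq H$, which forces $\h$ to be a generating $n$-tuple corresponding to $\g$). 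Once $M = \mathfrak{N}$ is secured, the rest is the routine verification of homomorphism, surjectivity, and injectivity sketched above.
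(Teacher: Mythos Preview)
Your overall architecture is right, but there is a genuine gap at the step where you assert $e_H\in M$. The correspondence $N\leftrightsquigarrow M$ is purely a matching of indices coming from $\tcon gE=\tcon hF$; nothing in the definition of two-sided configuration forces the identity of $H$ to lie in the block that happens to correspond to $N$. In the one-sided setting one can shift the partition by a right translation to arrange this, but the paper points out explicitly (just before Lemma~\ref{important}) that this trick fails for two-sided configurations, and Remark~1(b) after the paper's own proof of this lemma flags the assumption ``$e_H\in M$'' as an error that appeared in earlier literature. Once that claim is removed, your chain ``$W(J,\rho;\h)\in M \iff W(J,\rho;\h)M=M$'' collapses: both directions use $e_H\in M$.

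The fix, which is what the paper does, is to show only that $M$ is a \emph{coset} of $\mathfrak N$, not $\mathfrak N$ itself. First, if $M$ met two distinct $\mathfrak N$-cosets, pick $x,y\in M$ with $yx^{-1}\notin\mathfrak N$; writing $yx^{-1}=W(J,\rho;\h)$ gives $y\in W(J,\rho;\h)M\cap M\neq\emptyset$, forcing $W(J,\rho;\h)\in\mathfrak N$, a contradiction. Second, for any $z\in M$ the set $Mz^{-1}$ contains $e_H$ and is fixed by every $h\in\mathfrak N$, so $\mathfrak N\subseteq Mz^{-1}$, i.e.\ $\mathfrak Nz\subseteq M$; together with the first step this gives $M=\mathfrak Nz$. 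With $M$ a coset of $\mathfrak N$, your map $W(J,\rho;\g)\mapsto W(J,\rho;\h)\mathfrak N$ is well defined (equal $M$-translates give equal $\mathfrak N$-cosets after cancelling $z$), surjective, and has kernel exactly $N$ by the separation property, so $G/N\cong H/\mathfrak N$. Everything else in your outline survives; you just need to replace ``$M=\mathfrak N$'' by ``$M$ is a coset of $\mathfrak N$.''
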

\begin{proof}
Assume that $(\g,q^{-1}(\E))$ is a golden configuration pair w.r.t. $N$. Suppose that $\Con_\text t(\g,q^{-1}(\E))=\tcon  hF$ and $N\leftrightsquigarrow M\in\F$. By the preceding lemma, we can assume that $M$ is a normal subset and $W(J,\rho;\h)M\cap M=\nil$ when $W(J,\rho;\g)\not\in N$. Define 
$$\mathfrak{N}=H_M:=\{h\in H:\,hM=M\}$$
By normality of $M$, we understand that $\mathfrak N$ is a normal subgroup of $H$. Moreover, one can check that for a representative pair $(J,\rho)$, $W(J,\rho;\h)M\cap M\neq\nil$ if and only if $W(J,\rho;\g)N=N$, and the last one is equivalent to $W(J,\rho;\h)\in \mathfrak N$. We claim that $M$ is a coset of $\mathfrak{N}$; first, suppose that $M$ intersects two different cosets of $\mathfrak{N}$, so there are $x$ and $y$ in $M$ with $yx^{-1},y\not\in\mathfrak{N}$. Let $(J,\rho)$ be a representative pair such that $yx^{-1}=W(J,\rho;\h)$. Then $y\in W(J,\rho;\h)M\cap M$, therefore $W(J,\rho;\h)\in\mathfrak N$, this is a contradiction. So, there is an $x\in H$ such that $M\subseteq\mathfrak Nx$. If $h\in\mathfrak N$, then $h$ fixes $Mz^{-1}$, $z\in M$, under the left multiplication. But the last set contains $e_H$, hence $h\in Mz^{-1}$, and therefore $\mathfrak Nz\subseteq M\subseteq\mathfrak Nx$. This implies that $M=\mathfrak{N}x$.
\par The map
$$G\rightarrow H/\mathfrak{N},\quad W(J,\rho;\g)\mapsto W(J,\rho;\h)\mathfrak{N}$$
along with the explanation at the beginning of the proof gives an epimorphism which has $N$ as its kernel; so, $G/N$ and $H/\mathfrak{N}$ are isomorphic.
\end{proof}
\begin{remark}
\textbf{(a)} We proved that $M=\mathfrak{N}x$ for an $x\in H$. By normality of $M$, we conclude that $\mathfrak{N}x^h=\mathfrak{N}x$, for every $h\in H$. Therefore $q_\mathfrak N(x)\in Z(H/\mathfrak N)$, where $Z$ stands for the center of the group $H/\mathfrak N$.
\par \textbf{(b)} If $M$ contains the identity of $H$, then by the above Lemma, $\mathfrak{N}$ is nothing but $M$. This means that $M$ should be a subgroup of $H$. But there is no reason to say that $M$ should contain $e_H$, so it is not true to consider such an assumption, as it was in proofs of statements in the last section of \cite{ry}.\par 
\textbf{(c)} Lemma \ref{4.4}, is indeed a generalization of \cite[Lemma 6.3]{arw} and \cite[Proposition 2.2]{ary}, absolutely in the context of two-sided configuration. The main consequence of this lemma is:
\par 
Let $G$ be a finitely presented group with a golden configuration pair. Assume that for a group $H$ we have $G\approx_\text t H$. Then $G$ is isomorphic to a quotient of $H$. In addition, if $M$ is a singleton subset of $H$, then $G\cong H$.
 \end{remark}
Now, we state and prove the main theorem of this section (compare with \cite[Proposition 3.5.]{rs}):
\begin{theorem}
\label{FP}
Let $G$ be a finitely presented Hopfian group with a golden configuration pair, and $H$ be a group which $G\approx_{\text t}H$. Then $G\cong H$.
\end{theorem}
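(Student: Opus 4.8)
The plan is to extract $G$ as a quotient of $H$ from Lemma~\ref{4.4}, produce an epimorphism back from $G$ to $H$, and then invoke the Hopfian property of $G$ to collapse the kernel. Since two-sided configuration equivalence is symmetric, $G\teq H$ gives $H\teq G$ as well. Regard the golden configuration pair of $G$ as golden with respect to $N=\{e_G\}$; then $G/N=G$ is finitely presented, so Lemma~\ref{4.4} yields a normal subgroup $\mathfrak N$ of $H$ with $G\cong H/\mathfrak N$. Tracking that proof, if $(\h,\F)$ is a configuration pair of $H$ with $\Con_\text t(\g,\E)=\tcon hF$ and $\{e_G\}\leftrightsquigarrow M\in\F$, then $\mathfrak N=\{h\in H:\,hM=M\}$, the block $M$ is a single coset $\mathfrak N x$, and the isomorphism is induced by $g_i\mapsto h_i\mathfrak N$. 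Thus there is an epimorphism $\theta\colon H\twoheadrightarrow G$ with $\theta(h_i)=g_i$ and $\ker\theta=\mathfrak N$, and the theorem amounts to showing $\mathfrak N=\{e_H\}$.

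To do this I would build an epimorphism $\eta\colon G\twoheadrightarrow H$ in the reverse direction. Granting it, $\theta\circ\eta$ is a surjective endomorphism of $G$, hence an automorphism since $G$ is Hopfian; so $\eta$ is injective, therefore an isomorphism, and then $\theta=(\theta\circ\eta)\circ\eta^{-1}$ is an isomorphism too, giving $\mathfrak N=\ker\theta=\{e_H\}$ and $G\cong H$. The reverse epimorphism would come from Lemma~\ref{4.4} applied with the roles of $G$ and $H$ exchanged to the equivalence $H\teq G$, provided one can show that (a suitable refinement of) $(\h,\F)$ is a golden configuration pair for $H$ with respect to the trivial subgroup and that $H$ is finitely presented: one then gets a normal subgroup $N'$ of $G$ with $H\cong G/N'$, and $\eta$ is the induced map $g_i\mapsto h_i$. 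For the golden property one transports the representative pairs $(J_g,\rho_g)$ of $G$ through the isomorphism $G\cong H/\mathfrak N$ to index the elements of $H$, obtaining condition~(1) of Definition~\ref{recognizable}, and one derives condition~(2) for $(\h,\F)$ from condition~(2) for $(\g,\E)$ by noting that any group that is two-sided configuration equivalent to $H$ via a pair matching $\F$ is, through the correspondence $\Con_\text t(\g,\E)=\tcon hF$, equivalent to $G$ via the matching pair, so the non-intersection statement carries over. For finite presentation, the finitely many defining relators of $G$, pushed to words in $\h$, land in $\mathfrak N$ and normally generate it (because $G\cong H/\mathfrak N$), so $\mathfrak N$ is normally finitely generated and one argues that $H$, an extension of $\mathfrak N$ by the finitely presented $G$, is finitely presented.

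The heart of the argument — and where I expect the real difficulty — is the transfer of these two properties to $H$. The subtlety in the golden property is that $M$ need not contain $e_H$ (Remark~(b)), so $(\h,\F)$ is not literally of the shape $(\h,q^{-1}(\E_H))$ demanded by Definition~\ref{recognizable}; one must first translate $\F$ by $x^{-1}$, which is permissible for two-sided configuration sets only by using that $q_{\mathfrak N}(x)$ is central in $H/\mathfrak N$ (Remark~(a)), and then check that this interacts correctly with both the left and the right translates appearing in a two-sided configuration. The subtlety in finite presentation is that an extension of a normally finitely generated subgroup by a finitely presented group need not be finitely presented in general, so here one must exploit the uniform length bounds built into the proof of Lemma~\ref{all representative pair instead of one}, which bound the relations of $H$ among $\h$ in terms of those of $G$ among $\g$; making this precise, together with the translation point above, is the main obstacle I foresee.
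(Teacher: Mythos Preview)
Your strategy diverges from the paper's and runs into exactly the obstacles you flag, which are genuine and not resolved by the hints you give. The assertion that $H$ must be finitely presented fails already in principle: take $H$ to be the lamplighter group $\mathbb Z_2\wr\mathbb Z$ with $\mathfrak N$ the base $\bigoplus_{\mathbb Z}\mathbb Z_2$; then $\mathfrak N$ is the normal closure of a single element, $H/\mathfrak N\cong\mathbb Z$ is finitely presented, yet $H$ is not. Nothing in Lemma~\ref{all representative pair instead of one} supplies the missing finiteness, since that lemma only controls how relations of $G/N$ act on the partition, not the relations of $H$ itself. The transfer of the golden property to $H$ is equally problematic: Definition~\ref{recognizable} with $N=\{e_H\}$ demands a representative pair for every nontrivial element of $H$, but pulling back through $G\cong H/\mathfrak N$ only labels cosets of $\mathfrak N$, not individual elements. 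So as written, Lemma~\ref{4.4} cannot be invoked with the roles of $G$ and $H$ exchanged.

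The paper sidesteps all of this. It never asserts anything about $H$ beyond $G\teq H$. The key move is to stay inside $G$: if $M$ were not a singleton, split $M=M_1\cup M_2$, refine $\F$ to $\F'=\{M_1,M_2\}\cup(\F\setminus\{M\})$, and use the equivalence $H\teq G$ to find a configuration pair $(\g',\mathcal K')$ of $G$ with $\tcon h{F'}=\tcon{g'}{K'}$. Coarsen $\mathcal K'$ to $\mathcal K$ so that $(\mathcal K',\mathcal K)\sim(\F',\F)$; by Lemma~\ref{Similarity} this forces $\tcon gE=\tcon{g'}K$. Now apply the golden property of $(\g,\E)$ \emph{with $(\g',\mathcal K)$ playing the role of $(\h,\F)$}: Lemma~\ref{4.4} and its proof give a surjection $G\twoheadrightarrow G/K_G\cong G$, which is an isomorphism since $G$ is Hopfian, so $K_G=\{e_G\}$ and hence $|K|=1$. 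But $K$ corresponds to $M$ and therefore splits as $K_1\cup K_2$ with both pieces nonempty, a contradiction. Thus $M$ is a singleton and Remark~1(c) finishes. The Hopfian property is used on an endomorphism of $G$, not on a map through $H$, and no structural hypotheses on $H$ are ever needed.
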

\begin{proof}
Let $(\g,\E)$ be a golden configuration pair. Assume that $\tcon gE=\tcon hF$, for a configuration pair $(\h,\F)$ of $H$. Let $M\in\F$ be such that $\{e_G\}\leftrightsquigarrow M$. Without loss of generality, we can assume that $M$ is a normal subset and $W(J,\rho;\h)M\cap M=\nil$ when $W(J,\rho;\g)\not\in N$.\\
 By Remark 1.\textbf{(c)}, if $M$ is singleton, then we've done. Seeking a contradiction, let us suppose that $M=M_1\cup M_2$, where $M_1$ and $M_2$ are nonempty disjoint subsets. Consider the following partition of $H$, 
$$\F'=\{M_1,M_2\}\cup(\F\setminus\{M\})$$
There is a configuration pair $(\g',\mathcal K')$ of $G$ such that $\tcon h{F'}=\tcon {g'}{K'}$. Make $\mathcal K'$ coarser to get a partition $\mathcal K$ such that $(\mathcal K',\mathcal K)\sim(\F',\F)$. By Lemma \ref{Similarity}, 
$$\tcon gE=\tcon hF=\tcon {g'}{K}$$
 If $\{e_G\}\leftrightsquigarrow K$, for $K\in\mathcal K$, then $K=K_1\cup K_2$, where $K_i\leftrightsquigarrow M_i$, $i=1,2$. 
Since $G$ is Hopfian, by Lemma \ref{4.4} and Remark 1.\textbf{(c)}, we get 
$K_G=\{g\in G: gK=K\}=\{e_G\}$, and this is equivalent to $|K|=1$, which is impossible. 
\end{proof}
In the next section, we will see that there are many groups with golden configuration pair.

\section{Groups with golden configuration pairs}
 Let $\varsigma$ be a finite-range function on a group $G/N$. we will denote the range of $\varsigma$ by $\{\varsigma_1,\dotsc,\varsigma_m\}$. Set $E(\varsigma_i):=q^{-1}(\varsigma^{-1}(\varsigma_i))$, $i=1,\dotsc,m$. The partition $\E:=\{E(\varsigma_i):\,i=1,\dotsc,m\}$ of $G$ is called the\textit{ $\varsigma$-partition} of $G$.
 \begin{definition} \label{golden systems}
 Let $N$ be a normal subgroup of a group $G$. We say that $G$ admits a \textit{golden system} w.r.t. $N$, if there exist a generating set $\g$ of $G$, a set of representative pairs, $\{(J_g,\rho_g):\,g\in G\setminus N\}$, and a finite-range function $\varsigma$ on $G/N$ with following properties: 
 \begin{enumerate}
 \item $q(g)=W(J_g,\rho_g;q(\g))$, $g\in G\setminus N$,
 \item  $E(\varsigma(e_{G/N}))=N$, 
 \item Let $\E$ be the $\varsigma$-partition of $G$. If we have $\tcon gE=\tcon hF$, for a configuration pair $(\h,\F)$ of a group $H$, and we denote by $F(\varsigma_i)$ an elements in $\F$ corresponding to $E(\varsigma_i)$, $i=1,\dotsc,m$, then 
 $$W(J_g,\sigma_g;\h)F(\varsigma(e_{G/N}))\subseteq F(\varsigma(q(g)))$$
 \end{enumerate}
 We call the triple $(\g,\varsigma,\{(J_g,\rho_g)\}_g)$ \textit{the golden system w.r.t. $N$}and $(J_g,\rho_g)$ \textit{the golden representative pair} of $q(g)$. \\
We say that a group $G$ admits a golden system, if it admits a golden system w.r.t. $\{e_G\}$. 
\end{definition}
It is evident that a group $G$ admits a golden system w.r.t. a normal subgroup $N$, if and only if $G/N$ admits a golden system. \par
The importance of this definition will appear in the following proposition:
\begin{proposition}
\label{golden system and golden configuration pair}
Let $N$ be a normal subgroup of a group $G$, and $(\g,\varsigma,\{(J_g,\rho_g)\}_g)$ be a golden system w.r.t. $N$. If $\E$ is the $\varsigma$-partition of $G$, then $(\g,\E)$ will be a golden configuration pair w.r.t. $N$.
\end{proposition}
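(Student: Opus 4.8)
The plan is to verify the two conditions in Definition \ref{recognizable} for the pair $(\g, q^{-1}(\E))$, where $\E$ is the $\varsigma$-partition, using the golden system $(\g,\varsigma,\{(J_g,\rho_g)\}_g)$ w.r.t.\ $N$. Condition (1) of Definition \ref{recognizable}, namely $q(g) = W(J_g,\rho_g;q(\g))$ for $g\in G\setminus N$, is literally condition (1) of Definition \ref{golden systems}, so nothing is to be done there. Also, since $E(\varsigma(e_{G/N})) = N$ by condition (2) of the golden system, the atom of $\E$ containing $N$ is exactly $N$ itself, which means $\{e_{G/N}\}\in\E$ (viewing $\E$ as a partition of $G/N$ via $q$); so the requirement that $\E$ contain $\{e_{G/N}\}$ holds. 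Thus the real content is condition (2) of Definition \ref{recognizable}.

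First I would fix a configuration pair $(\h,\F)$ of a group $H$ with $\Con_\text t(\g, q^{-1}(\E)) = \tcon hF$, and let $M\in\F$ be the set corresponding to $N$; since $N = E(\varsigma(e_{G/N}))$, this $M$ is the set $F(\varsigma(e_{G/N}))$ appearing in condition (3) of the golden system. Fix $g\in G\setminus N$ with golden representative pair $(J_g,\rho_g)$. By condition (3) of the golden system,
$$W(J_g,\rho_g;\h)\,M \subseteq F(\varsigma(q(g))).$$
The key observation is that $F(\varsigma(q(g)))$ is disjoint from $M$: indeed $F(\varsigma(q(g)))$ and $M = F(\varsigma(e_{G/N}))$ are both atoms of $\F$, and they are distinct because $\varsigma(q(g)) \neq \varsigma(e_{G/N})$ — this last inequality follows from $E(\varsigma(q(g)))$ being a coset (or union of cosets) of $N$ \emph{not} equal to $N$ (as $g\notin N$ and $E(\varsigma(e_{G/N})) = N$), so these two subsets of $G$ are disjoint, hence correspond to distinct atoms of $\F$. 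Distinct atoms of a partition being disjoint, we get $F(\varsigma(q(g)))\cap M = \nil$, and therefore
$$W(J_g,\rho_g;\h)\,M \cap M \subseteq F(\varsigma(q(g)))\cap M = \nil,$$
which is exactly condition (2) of Definition \ref{recognizable}.

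The main point to be careful about — the step I expect to require the most attention — is the claim that $E(\varsigma(q(g)))$ and $N$ are genuinely disjoint sets in $G$, so that their corresponding atoms $F(\varsigma(q(g)))$ and $M$ in $\F$ are distinct (and thus disjoint). Here one uses that the $\varsigma$-partition is built by pulling back level sets of $\varsigma$ along $q$: since $E(\varsigma_i) = q^{-1}(\varsigma^{-1}(\varsigma_i))$, two such sets $E(\varsigma_i)$ and $E(\varsigma_j)$ are either equal or disjoint, and $E(\varsigma(q(g))) = E(\varsigma(e_{G/N}))$ would force $\varsigma(q(g)) = \varsigma(e_{G/N})$, i.e.\ $q(g)\in\varsigma^{-1}(\varsigma(e_{G/N}))$, i.e.\ $g\in q^{-1}(\varsigma^{-1}(\varsigma(e_{G/N}))) = N$, contradicting $g\in G\setminus N$. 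Once this disjointness is in hand, the correspondence $E_i\leftrightsquigarrow F_i$ (which, as recalled in the preliminaries, sends disjoint unions of atoms to disjoint unions of the matching atoms) immediately yields that $F(\varsigma(q(g)))$ and $M$ are disjoint members of $\F$, and the rest is the short containment argument above. Finally, since this works for every $g\in G\setminus N$, the pair $(\g,\E) = (\g, q^{-1}(\E'))$ — where $\E'$ denotes $\E$ viewed as a partition of $G/N$ — satisfies both clauses of Definition \ref{recognizable}, so it is golden w.r.t.\ $N$.
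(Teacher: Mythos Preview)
Your proof is correct and follows the same approach as the paper's: both simply verify the conditions of Definition~\ref{recognizable} from those of Definition~\ref{golden systems}. The paper's proof is extremely terse---it only records that $\E$ has the form $q^{-1}(\P)$ for a partition $\P$ of $G/N$ and declares the rest ``evident''---whereas you spell out explicitly the disjointness argument showing $F(\varsigma(q(g)))\cap M=\nil$ for $g\notin N$, which is exactly the content the paper leaves to the reader.
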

\begin{proof}
$\E$ is in the form of $q^{-1}(\mathcal{P})$, for the partition 
$$\P=\{\{q(g):\varsigma(q(g))=\varsigma_i\}:\, i=1,\dotsc,r\}$$
of $G/N$. It is evident by this point that $(\g,\E)$ becomes a golden configuration pair.
\end{proof}
We list below classes of groups which admits a golden systems:
\subsection{Finite groups}
\label{finite group and golden system}
A group $G$ with a finite quotient, say $G/N$, admits a golden system w.r.t. $N$. By \cite[Lemma 6.2.]{arw}, we know that there is a generating set $\g=(g_1,\dotsc,g_n)$ of $G$ such that $G/N=\{q(g_1)=e_{G/N},q(g_2),\dotsc,q(g_n)\}$. For $g\in G$, if $q(g)=q(g_i)$, for the smallest index $i$, $i=1,\dotsc,n$, then we will take $J_{g}=i$, $\rho_{i}=1$, and $\varsigma(q(g))=i$. we claim that $(\g,\varsigma,\{(J_{g},\rho_{g})\}_g)$ is a golden system. We have $q(g)=q(g_i)=W(J_g,\rho_g;q(\g))$, $g\in G$, and, $E(\varsigma(g))=g_iN=gN$. Since 
 $$W(J_g,\rho_g;\g)E(\varsigma(e_{G/N}))=g_iN=E(\varsigma(q(g)))$$
 then by Lemma \ref{important}, (3) in the Definition \ref{golden systems} holds.\par 
In particular, all finite groups admit a golden system.

\subsection{Polycyclic groups}
\label{Polycyclic group and golden system}
Terminologies of polycyclic groups are as in \cite{sims}.\\ 
Let $A$ be a polycyclic group, and $A=A_1\unrhd\dots\unrhd A_{n+1}=\{e_A\}$ be a polycyclic series for $A$. For $1\leq i\leq n$, let $a_i$ be an element of $A_i$ whose image in $A_i/A_{i+1}$ generates
that group. The sequence $\mathfrak a:=(a_1,\dotsc, a_n)$ will be called a \textit{polycyclic generating set} of $A$. By the proof of {\small \cite[Chapter 9; Proposition 3.5]{sims}}, $A_i=\langle a_i,\dotsc, a_n\rangle$ and every element $a$ of $A_i$ can be expressed in the form $\prod_{j=i}^n a_j^{\alpha_j}$, where the exponents $\alpha_j$ are integers. Let
$\mathfrak I$ denote the set of subscripts $i$ such that $A_i/A_{i+1}$ is finite, and let $m_i=|A_i : A_{i+1}|$, the order of $a_i$ relative to $A_{i+1}$, if $i$ is in $\mathfrak I$. It shall be normally assumed that the generating set is not redundant in the sense that no $a_i$ is in $A_{i+1}$. Thus $m_i>1$ for each $i\in\mathfrak I$. We shall say that the
expression for $a$ is a \textit{collected word} if $0\leq\alpha_j\leq m_j$ for $j$ in $\mathfrak I$, and the representative pair corresponds to a collected word called a \textit{collected representative pair}. Each element of $A$ can be described by a unique collected word in the generators $a_1,\dotsc, a_n$. \par
 Assume that $(J_a,\rho_a)$ is the collected representative pair of $a\in A\setminus\{e_A\}$. For $a\in A\setminus\{e_A\}$, let $\prod_{j=1}^n a_j^{\alpha_j}$ be its collected word, set $\varsigma(a)=(\omega_1,\dotsc,\omega_n)$, where $\omega_j=\alpha_j$, if $j\in\mathfrak I$, and for $j$ not in $\mathfrak I$, $\omega_i$ is 0, 1 or $-1$, depending on whether $\alpha_j$ is zero, positive or negative, respectively. Also, put $\varsigma(e_G)=\textbf{o}$, the $n$-tuple whose all components are zero. It is obvious that there are finitely many elements in $\varsigma(A)$, say $\varsigma_0,\varsigma_1,\dotsc, \varsigma_m$, where $\varsigma_0=\varsigma(e_G)$.\par We claim that $(\mathfrak a,\varsigma,\{(J_a,\rho_a)\}_a)$ is a golden system for $A$. Suppose that $\tcon aE=\tcon bF$, for a configuration pair $(\mathfrak{b},\F)$ of a group $B$, and we denote by $F(\varsigma(a))$ an element in $\F$ corresponding to $E(\varsigma(a))$, $a\in A$.\\
Let $\prod_{j=1}^n a_j^{\alpha_j}$ be the collected word of $a\in A\setminus\{e_A\}$. For $x\in A$, with collected word $\prod_{j=i}^n a_j^{\alpha_j}$, we have:
\begin{enumerate}
\item[(i)] If $i\in\mathfrak I$, then $a_iE(\varsigma(x))=E(\varsigma(a_ix))$,
\item[(ii)] If $k<i$ with $k\in\mathfrak I$, then $a_kE(\varsigma(x))=E(\varsigma(a_kx))$,
\item[(iii)] If $k<i$ with $k\not\in\mathfrak I$, then $a_k\left(E(\varsigma(x))\cup E(\varsigma(a_kx))\right)= E(\varsigma(a_kx))$.
\item[(iv)] If $i\not\in\mathfrak I$, and $\alpha_i<0$, then 
$$ a_iE(\varsigma(x))=E(\varsigma(x))\cup E\left(\varsigma\left(\prod_{j=i+1}^na_j^{\alpha_j}\right)\right),$$
\item[(v)] If $i\not\in\mathfrak I$, and $\alpha_i>0$, then $$a_i\left(E(\varsigma(x))\cup E\left(\varsigma\left(\prod_{j=i+1}^na_j^{\alpha_j}\right)\right)\right)=E(\varsigma(x)).$$
\end{enumerate} 
By (i)-(v), induction on $\n{J_a}$ and using Lemma \ref{important}, we can easily prove that 
\begin{equation*}
W(J_a,\rho_a;\mathfrak{b})F(\varsigma(e_A))\subseteq F(\varsigma(a)).
\end{equation*}
It is not difficult to show that if  a group $G$ has a normal subgroup $N$ such that $G/N$ is polycyclic, then $G$ will admit a golden system w.r.t. $N$.

\subsection{Polynomial type groups}\label{4.3}
Let $P:=P(x_1,\dotsc,x_n)$ be a polynomial in $\mathbb Z[x_1,\dotsc, x_n]$ without a constant term and without negative coefficients. Let $\{G_1,\dotsc, G_n\}$ be a collection of groups each of them is polycyclic or finite. We denote by $P(G_1,\dotsc, G_n)$ a group obtained from $P$, by putting $G_i$ instead of $x_i$, $i=1,\dotsc,n$, free product instead of multiplication and direct product instead of addition. For example, set $P(x,y)=xy^3+2x^2y$, then for polycyclic or finite groups $G$ and $H$, $P(G,H)$ denotes the group \small{$(G*H*H*H)\oplus(G*G*H)\oplus(G*G*H)$}. We mean such a group by a \textit{polynomial type group}.  \par 
Consider groups $G$ and $H$ with golden systems $(\g,\varsigma_G,\{(J_g,\rho_g)\}_g)$ and $(\h,\varsigma_H,\{(J_h,\rho_h)\}_h)$, resp. The following two procedures will be used in obtaining a golden system for polynomial type groups:\par
\textbf{1. Free product.} For a non-identity element $z$ in $G*H$, let $z_1\dots z_s$ be its reduced word expression in $G$ and $H$. Then define $\varsigma_{G*H}(z)$ to be $\varsigma_G(z_1)$ or $\varsigma_H(z_1)$ according to $z_1\in G$ or $z_1\in H$, respectively. For the identity element, put $\varsigma_{G*H}(e_{G*H})=0$. \par 
\textbf{2. Direct product.} For $(g,h)\in G\oplus H$, set $\varsigma_{G\oplus H}(g,h):=(\varsigma_G(g),\varsigma_H(h))$. 
\par A polynomial type group admits a golden system. First Note that:
\par Free product of finitely many poycyclic or finite groups admits a golden system; let $A_1,\dotsc, A_r$ be polycyclic groups and $F_1,\dotsc,F_s$ be finite groups. Set {\footnotesize $G=A_1*\dotsb*A_r*F_1*\dotsb*F_s$}. Assume that $(\mathfrak a_i,\varsigma_{A_i},\{(J_a,\rho_a)\}_a)$ is a golden system of $A_i$, $i=1,\dotsc,r$, gained in \ref{Polycyclic group and golden system}, and $(\mathfrak b_{j},\varsigma_{F_j},\{(J_x,\rho_x)\}_x)$ is a golden system of $F_j$, $j=1,\dotsc,s$ as obtained in \ref{finite group and golden system}. Hence, 
$$\g=(g_1,\dotsc,g_n):=\mathfrak a_1\oplus\dotsb\oplus\mathfrak a_r\oplus\mathfrak b_1\oplus \dotsb\oplus \mathfrak{b}_s$$
 is a generating set of $G$. Iterating the first procedure above, for free product, we get a function $\varsigma$ on $G$. For a non-identity $g\in G$, let $z_1\dotsb z_k$ be its reduced word expression, and consider the representative pair $(J_g,\rho_g)$ such that 
\begin{align*}
W(J_g,\rho_g;\g)=\prod_{i=1}^kW_i.
\end{align*} 
where $W_i$, $i=1,\dotsc,k$, is the word corresponding to the golden representative pair of $z_i$. Then, like the argument used in \ref{Polycyclic group and golden system}, one can show that \small{$(\g,\varsigma_G,\{(J_g,\rho_g)\}_g)$} is a golden system of $G$.\\
Using the second procedure, and the preceding paragraph, it can be easily seen that a polynomial type group admits a golden system.
\subsection{Groups which have a subnormal series such that its factors are polynomial type}
\label{Subnormal series with polycyclic or finite factors} 
let $G$ be a group which has a subnormal series, 
\begin{equation}\label{subnormal series}
 N_k=\{e_G\}\unlhd N_{k-1}\unlhd\dotsb\unlhd N_0=G
\end{equation}
each of its factor is polynomial type. Suppose that {\small $q_i:N_i\rightarrow N_i/N_{i+1}$} is the quotient map and $\a_i$ is an ordered subset of $N_i$, such that 
$$\left(q_i(\a_i),\varsigma^{(i)},\left\{\left(J_{q_i(g)},\rho_{q_i(g)}\right)\right\}_g\right)$$
is a golden system of $N_i/N_{i+1}$, where the index $g$ ranges over $N_i\setminus N_{i+1}$, $i=0,\dotsc,k-1$. One can see that $\g_i:=\a_i\oplus\dotsb\oplus\a_{k-1}$  is a generating set of $N_i$, $i=0,\dotsc,k-1$; for $g\in N_i$, put $g_1=g$, take an index $i_1$ such that $g\in N_{i_1}\setminus N_{i_1+1}$, so, there is $g_2\in N_{i_1+1}$ with 
$$g_1=W(J_{q_{i_1}(g_1)},\rho_{q_{i_1}(g_1)};\a_{i_1})g_2.$$
Continuing this process, we can find $g_3$, $g_4$, ..., $g_s$ and indices $i_1<i_2<\dotsb<i_s$, such that 
$$g_1=\prod_{j=1}^sW(J_{q_{i_j}(g_j)},\rho_{q_{i_j}(g_j)};\a_{i_j}).$$
Therefore, $\g=\g_0:=(g_1,\dotsc,g_n)$ is a generating set of $G$. Now, for a non-identity $g\in G$, let 
$$g=\prod_{j=1}^rW\left(J_{q_{i_j}(g_j)},\rho_{q_{i_j}(g_j)};\a_{i_j}\right) $$
be an expression, obtained by the above procedure, so $i_1<i_2<\dotsb<i_r$, and $g_j\in N_{i_j}\setminus N_{i_j+1}$. Define 
$$\varsigma(g):=\varsigma^{(i_1)}(q_{i_1}(g_1))$$
and consider representative pair $(J_g,\rho_g)$ so that 
\begin{align*}
W(J_g,\rho_g;\g):=\prod_{i=1}^r W\left(J_{q_{i_j}(g_j)},\rho_{q_{i_j}(g_j)};\a_{i_j}\right)
\end{align*}
By part \ref{4.3}, we see that $(\g,\varsigma,\{(J_g,\rho_g)\})$ is a golden system for $G$. \par 

\subsubsection{Polycyclic-by-finite groups}
Recall that a polycyclic-by-finite group, is a group which has a polycyclic normal subgroup of finite index, hence, by the above explanation, every polycyclic-by-finite group admits a golden system.
 
\subsubsection{$FC$-groups}
If a finitely generated group $G$ has a finite commutator subgroup, then $G$ admits a golden system by \ref{Subnormal series with polycyclic or finite factors}. Since $FC$-groups have finite commutator subgroups, every $FC$-group admits a golden system.
\par
We now state the main results of this paper, obtained from Lemma \ref{4.4}, Theorem \ref{FP}, Proposition \ref{golden system and golden configuration pair} and the fact that all poycyclic or finitely generated $FC$ groups are finitely presented:
\begin{theorem}
Let $G$ be a finitely presented Hopfian group with a golden system. Then a group $H$ which $G\teq H$ is isomorphic to $G$.  
\end{theorem}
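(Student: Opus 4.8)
The plan is to deduce the statement directly from Theorem \ref{FP}, using Proposition \ref{golden system and golden configuration pair} as the bridge. Recall that saying $G$ \emph{admits a golden system} means precisely that it admits one with respect to the trivial normal subgroup $N=\{e_G\}$; so let $(\g,\varsigma,\{(J_g,\rho_g)\}_g)$ be such a system and let $\E$ be the associated $\varsigma$-partition of $G$.

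First I would apply Proposition \ref{golden system and golden configuration pair} with $N=\{e_G\}$: it shows that $(\g,\E)$ is a golden configuration pair with respect to $\{e_G\}$, which is exactly the meaning of $(\g,\E)$ being a \emph{golden configuration pair} in the sense of Definition \ref{recognizable}. Hence $G$ is a finitely presented Hopfian group possessing a golden configuration pair, and since by hypothesis $G\teq H$, Theorem \ref{FP} applies verbatim and yields $G\cong H$, which finishes the proof.

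So once the bridging Proposition \ref{golden system and golden configuration pair} is in place there is essentially nothing further to grind out; the substantive work has already been packaged into the earlier results. The hard part lies upstream, in Lemma \ref{4.4} and Theorem \ref{FP}: from $G\teq H$ one must produce the candidate normal subgroup $\mathfrak N=H_M=\{h\in H:\,hM=M\}$ attached to the block $M\in\F$ corresponding to $\{e_G\}$, show — using the golden condition $W(J,\rho;\h)M\cap M=\nil$ whenever $W(J,\rho;\g)\neq e_G$, together with the normality of $M$ coming from Lemma \ref{important} — that $M$ is a single coset of $\mathfrak N$, and then run the Hopfian bootstrap: were $M$ not a singleton, one refines the partition inside $M$, pulls the refinement back to $G$ through Lemma \ref{Similarity}, and invokes Hopficity of $G$ to force the corresponding stabiliser in $G$ to be trivial, contradicting $|M|>1$.

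Finally, for the applications collected in the previous section — polycyclic groups, polycyclic-by-finite groups, finitely generated $FC$-groups, and more generally groups with a subnormal series whose factors are of polynomial type — it remains only to note that such groups are finitely presented and Hopfian, which is classical; the theorem above then applies to each of them.
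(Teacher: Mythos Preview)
Your argument is correct and matches the paper's own justification: the theorem is deduced from Proposition~\ref{golden system and golden configuration pair} (a golden system yields a golden configuration pair) together with Theorem~\ref{FP}. The additional commentary you give on the upstream lemmas and on the applications is accurate but not part of the proof itself.
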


\begin{corollary}(a) Let $G$ be either a polycyclic or an $FC$ group, and $G\teq H$ for a group $H$. Then $G\cong H$.
\par (b) Let $N$ be a normal subgroup of a group $G$ such that the quotient $G/N$ is either a polycyclic or an $FC$ group. If $G\teq H$ for a group $H$, then there exist a normal subgroup $\mathfrak{N}$ of $H$ such that $G/N\cong H/\mathfrak{N}$.
\end{corollary}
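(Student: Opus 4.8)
The plan is to assemble both parts from the machinery already in place: chiefly Lemma~\ref{4.4}, Theorem~\ref{FP} together with the golden-system reformulation stated immediately above the corollary, and Proposition~\ref{golden system and golden configuration pair}, supplemented by the structural facts about polycyclic and $FC$ groups collected in Section~4.

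For part~(a), I would first record that a polycyclic group is finitely presented and, being finitely generated and residually finite, is Hopfian; and that a finitely generated $FC$ group has finite commutator subgroup, hence is finite-by-(finitely generated abelian), so it too is finitely presented and Hopfian (again via residual finiteness). By Subsection~\ref{Polycyclic group and golden system} (respectively the discussion of $FC$-groups at the end of Subsection~\ref{Subnormal series with polycyclic or finite factors}) such a $G$ admits a golden system, hence by Proposition~\ref{golden system and golden configuration pair} a golden configuration pair. Thus $G$ satisfies every hypothesis of the theorem stated just above, and $G\teq H$ forces $G\cong H$; one could equally invoke Theorem~\ref{FP} directly.

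For part~(b), the essential point is the remark following Definition~\ref{golden systems} that $G$ admits a golden system w.r.t.\ $N$ precisely when $G/N$ does. Since $G/N$ is polycyclic or $FC$, it admits a golden system, so $G$ admits one w.r.t.\ $N$; by Proposition~\ref{golden system and golden configuration pair} the pair $(\g,\E)$ formed from the associated $\varsigma$-partition is golden w.r.t.\ $N$. Moreover $G/N$, being polycyclic or finitely generated $FC$, is finitely presented. As $G\teq H$, Lemma~\ref{4.4} now produces a normal subgroup $\mathfrak N$ of $H$ with $G/N\cong H/\mathfrak N$, which is exactly the assertion; note that part~(b) uses neither Theorem~\ref{FP} nor the Hopfian hypothesis.

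I do not anticipate a genuine obstacle: the corollary is essentially a repackaging of Lemma~\ref{4.4} and Theorem~\ref{FP}. The two points requiring a little care are the verification that finitely generated $FC$ groups are Hopfian and finitely presented --- which rests on their being finite-by-abelian --- and, for part~(b), the bookkeeping that transports a golden system on $G/N$ to one on $G$ relative to $N$, which is precisely the equivalence recorded just below Definition~\ref{golden systems}.
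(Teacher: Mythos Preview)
Your proposal is correct and follows precisely the route indicated by the paper, which derives the corollary directly from Lemma~\ref{4.4}, Theorem~\ref{FP}, Proposition~\ref{golden system and golden configuration pair}, and the finite-presentation of polycyclic and finitely generated $FC$ groups. The additional justifications you supply---Hopficity via residual finiteness and the transport of golden systems through the quotient---are exactly the small gaps the paper leaves implicit.
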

\begin{remark}
In \cite[Theorem 4.3]{rs1}, we proved that there exist non-isomorphic soluble groups $G$ and $H$ with same two-sided configuration sets. 
\end{remark}

 \section*{Acknowledgement}
The authors would like to express  their gratitude toward Banach Algebra Center of Excellence for Mathematics, University of Isfahan.



\end{document}